\documentclass[12pt]{article}
\baselineskip 20 truept
\usepackage{amscd,amssymb,latexsym,amsmath,amsthm,txfonts}
\usepackage{graphicx,graphics,times,shapepar}
\usepackage{cite}
\usepackage{makeidx}
\usepackage{hyperref}
\makeindex
\setlength{\textheight}{21cm}
\setlength{\textwidth}{15cm}
\newtheorem{theorem}{Theorem}[section]
\newtheorem{definition}{Definition}[section]

\newtheorem{example}{Example}[section]
\numberwithin{equation}{section}
\baselineskip 20 truept

\begin{document}
	{\title{Invariant subspace method: a tool for solving fractional partial differential equations. }
		\author{Sangita Choudhary and Varsha Daftardar-Gejji\footnote{Author for correspondence}\\
			Department of Mathematics\\
			Savitribai Phule Pune University, Pune 411007.\\
			\textit{schoudhary1695@gmail.com,\, vsgejji@gmail.com,}\\
			\textit{vsgejji@math.unipune.ac.in}}
		\date{}
		\maketitle }
	\begin{abstract}
		
		In this paper invariant subspace method has been employed for solving linear and non-linear time and space fractional partial differential equations involving Caputo derivative. A variety of illustrative examples are solved to demonstrate the effectiveness and applicability of the method.
		
		\medskip
	
		{\it Key Words and Phrases}: Caputo fractional derivative, time and space fractional partial differential equation, exact solution, invariant subspace method.
		
	\end{abstract}

	\section{Introduction}

	Fractional Calculus and in particular fractional differential equations (FDEs) have received considerable attention during last few decades owing to their applicability to various branches of science and engineering. FDEs have successfully modelled phenomena related to diffusion, control theory, visco-elastic systems, signal processing, fractional-order multipoles in electromagnetism and so on
	\cite{debnath2003recent,podlubny1998fractional}.
	
	Numerous decomposition/ numerical methods such as Adomian decomposition method \cite{adomian2013solving,daftardar2008solvingmulti}, New iterative method (NIM) \cite{daftardar2008solving}, Homotopy perturbation method \cite{he1999homotopy} has been employed to solve FDEs. The solutions obtained by all these methods however are local in nature and it is important to explore other techniques to find exact analytical solutions of FDEs. Exact	solutions play a vital role in the proper understanding of qualitative features of the concerned phenomena and processes	in various areas of science and engineering. 
	
	One of the analytical methods for finding exact solutions of partial differential equations (PDEs) is invariant subspace method. This method was proposed by Galaktionov and Svirshchevskii \cite{galaktionov1995invariant,galaktionov2006exact}and have been applied by many researchers thereafter \cite{ma2012refined,svirshchevskii1995lie,svirshchevskii1996invariant}. This method has been further extended for time fractional partial differential equations (FPDEs) and its utility has been illustrated in the literature \cite{gazizov2013construction,harris2013analytic,harris2014nonlinear,sahadevan2015invariant,sahadevan2016exact}. In the present paper we develop invariant subspace method for finding exact solutions of PDEs with fractional time and space derivatives. In this method fractional partial differential equations are reduced to systems of fractional differential equations in one variable which can be solved by known analytical methods.
	
	The organization of the paper is as follows. Section 2, deals with preliminaries and notations. In Section 3, we develop invariant subspace method for solving time and space fractional partial differential equations. In Section 4, we present illustrative examples to explain applicability of the method. It is clear from the examples that the method also works for time and space FPDEs with certain kinds of initial conditions. Finally concluding remarks are made in Section 5.
	
	\section{Preliminaries and Notations}
	In this section, we introduce notations, definitions and preliminaries which are used in the paper \cite{diethelm2010analysis,podlubny1998fractional}.
	\begin{definition}The fractional integral of order $\alpha > 0$ of function $f\in L^1([a,b],\mathbb{R})$ is defined as
		\begin{equation*}
		I^{\alpha}f(t)=\frac{1}{\Gamma(\alpha)}\int\limits_{a}^{t}\frac{f(\tau)}{(t-\tau)^{1-\alpha}}d\tau,~~t>a.
		\end{equation*}
	\end{definition}
	\begin{definition}
		Caputo fractional derivative of order $\alpha>0$ of $f\in C^n([a,b],\mathbb{R})$ is defined as 
		\begin{align*}
		\dfrac{\rm d^{\alpha}f(t)}{\rm dt^\alpha}=\left\{\begin{array}{lcr}I^{n-\alpha}D^{n}f(t)=\dfrac{1}{\Gamma(n-\alpha)}\displaystyle\int\limits_{a}^{t}\dfrac{f^{(n)}(\tau)}{(t-\tau)^{\alpha-n+1}}d\tau,~\mbox{$n-1 < \alpha < n,$}\\f^{(n)}(t),\hspace{6.4cm} \mbox{$\alpha=n,~n \in \mathbb{N}.$}
		\end{array}\right.
		\end{align*}
	\end{definition}
	The $n-$th order derivative of two-parametric Mittag-Leffler function $E_{\alpha,\beta}(t)$  is given by
	\begin{equation}\label{D1}
	E_{\alpha,\beta}^{(n)}(t)=\dfrac{\rm d^{n}}{\rm dt^{n}}E_{\alpha,\beta}(t)=\displaystyle\sum\limits_{k=0}^{\infty}\dfrac{(k+n)! t^{k}}{k! \Gamma(\alpha k+\alpha n+\beta)},~~n=0,1,2,\ldots.
	\end{equation}
	
	Laplace transform of the Caputo derivative of order $\alpha$ is,
	\begin{equation*}\mathcal{L}\left\{\dfrac{	\rm d^\alpha f(t)}{\rm dt^\alpha};s\right\}=s^{\alpha}\tilde{f}(s)-\sum\limits_{k=0}^{n-1}s^{\alpha-k-1}f^{(k)}(0),~~n-1<\alpha\leq{n},~n\in\mathbb{N},~\mathfrak{R}(s)>0,\end{equation*}
	where~$\tilde{f}(s)=\mathcal{L}\{f(t);s\}=\int\limits_{0}^{\infty}e^{-st}f(t)dt,~~s\in\mathbb{R}.$\\
	
	Laplace transform of the function $\varepsilon_n(t,a;\alpha,\beta):=t^{\alpha n+\beta-1}E_{\alpha,\beta}^{(n)}(\pm at^{\alpha})$ has the form 
	\begin{equation}\label{D2}
	\mathcal{L}\{t^{\alpha n+\beta-1}E_{\alpha,\beta}^{(n)}(\pm at^{\alpha});s\}=\frac{n!s^{\alpha-\beta}}{(s^{\alpha}\mp a)^{n+1}},~~\mathfrak R(s)>|a|^{\frac{1}{\alpha}},~n=0,1,\ldots. \end{equation}
	
	Let $I_n$ denote the n-dimensional linear space over $\mathbb{R}$ spanned by n linearly independent functions $\{\phi_i(x):~ i=0,1,\dots,n-1\}$, \textit{i.e.},
	\begin{eqnarray*}
		\hskip.4cm I_n= \mathfrak{L}\{\phi_0(x),\dots,\phi_{n-1}(x)\}=\left \{\sum_{i=0}^{n-1}k_i \phi_i(x)~ \bigg| ~k_i\in \mathbb{R}, i=0,\dots,n-1 \right \}
	\end{eqnarray*}
	
	A finite dimensional linear space $I_n$ is invariant with respect to a differential operator $N$ if $N[f]\in I_n, \forall f \in I_n.$

	\section{Main Results}

	Consider the following fractional partial differential equation 
	\noindent \begin{eqnarray}\label{FPDE}
	\sum_{i=0}^{m}\lambda_i\dfrac{\partial ^{\alpha+i}f(x,t)}{\partial t^{\alpha+i}}=N[f(x,t)]
	=N\left(x,f,\dfrac{\partial^{\beta}f}{\partial x^{\beta}},\dots,\dfrac{\partial^{\beta+k}f}{\partial x^{\beta+k}}\right),\end{eqnarray}
	where $N[f]$ is linear/ nonlinear differential operator. $\dfrac{\partial^{\alpha+i}f}{\partial t^{\alpha+i}},i=0,1,\dots,m$ and $\dfrac{\partial^{\beta+j}f}{\partial x^{\beta+j}},~j=0,\dots,k$ are Caputo time and space derivatives respectively. $\lfloor\alpha\rfloor= r,\lfloor \beta \rfloor =s, \textrm{where}~ r,s \in \mathbb{N}$ and $\lambda_i\in \mathbb{R}.$
	
	\begin{theorem}\label{THM1}
		If a finite dimensional linear space $I_{n+1}=\mathfrak{L}\{\phi_j(x)~|~j=0,\dots,n\}$ is invariant under the operator N[f], then FPDE (\ref{FPDE}) has a solution of the form
		\begin{equation}\label{U}
		f(x,t)=\sum_{j=0}^{n}K_j(t)\phi_j(x),
		\end{equation}where the coefficients $\{K_j\}$ satisfy the following system of FDEs
		\begin{equation*}
		\sum_{i=0}^{m}\lambda_i\dfrac{\rm d^{\alpha+i}K_j(t)}{\rm dt^{\alpha+i}}=\psi_j(K_1(t),K_2(t),\dots,K_n(t)),~j=0,1,\dots,n.
		\end{equation*}Here $\psi_j$'s are the expansion coefficients of $N[f]$ with respect to basis $\phi_j$'s.
	\end{theorem}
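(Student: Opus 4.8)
The plan is to substitute the ansatz \eqref{U} directly into the FPDE \eqref{FPDE} and to separate the spatial and temporal dependencies using the linear independence of the basis $\{\phi_j\}$. First I would fix $t$ and observe that, by construction, $f(\cdot,t)=\sum_{j=0}^{n}K_j(t)\phi_j(x)$ lies in $I_{n+1}$ for every $t$, since it is a real linear combination of the $\phi_j$ with coefficients $K_j(t)\in\mathbb{R}$. This is precisely the hypothesis needed to invoke invariance on the right-hand side.

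For the left-hand side, the key structural fact is that each Caputo time derivative $\partial^{\alpha+i}/\partial t^{\alpha+i}$ is a linear operator acting only in the variable $t$, while each $\phi_j(x)$ is independent of $t$. Hence every $\phi_j(x)$ passes through the derivative as a constant, giving
\[
\sum_{i=0}^{m}\lambda_i\frac{\partial^{\alpha+i}f}{\partial t^{\alpha+i}}
=\sum_{j=0}^{n}\left(\sum_{i=0}^{m}\lambda_i\frac{\mathrm{d}^{\alpha+i}K_j(t)}{\mathrm{d}t^{\alpha+i}}\right)\phi_j(x).
\]
Thus the entire left-hand side is already expressed in the basis $\{\phi_j\}$, with the $j$-th coefficient equal to $\sum_{i=0}^{m}\lambda_i\,\mathrm{d}^{\alpha+i}K_j/\mathrm{d}t^{\alpha+i}$.

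For the right-hand side, invariance of $I_{n+1}$ under $N$ guarantees that $N[f(\cdot,t)]\in I_{n+1}$, so it too admits an expansion $N[f]=\sum_{j=0}^{n}\psi_j\,\phi_j(x)$, which is \emph{unique} by linear independence of the $\phi_j$. Because $N[f]$ depends on $x$ only through $f$ and its space derivatives, and $f$ depends on $t$ only through the coefficients $K_0(t),\dots,K_n(t)$, each expansion coefficient $\psi_j$ is a function of $(K_0(t),\dots,K_n(t))$ alone; for nonlinear $N$ these functions may be nonlinear. Equating the two expansions and matching the coefficients of each $\phi_j$ then yields the claimed system of FDEs.

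I expect the main (and essentially the only) delicate point to be justifying that the Caputo derivative in $t$ commutes with multiplication by the $t$-independent functions $\phi_j(x)$ and passes through the finite sum; this rests on the linearity of the Caputo operator together with the fact that each $\phi_j$ is constant in $t$. The remaining ingredients, namely the existence and uniqueness of the basis expansion of $N[f]$ and the final coefficient-matching, follow immediately from the invariance hypothesis and the linear independence of the spanning set, so once these structural observations are in place the argument reduces to a bookkeeping verification.
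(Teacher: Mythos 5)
Your proposal is correct and takes essentially the same route as the paper's own proof: expand the left-hand side via linearity of the Caputo derivative in $t$ acting on the $t$-independent basis functions $\phi_j(x)$, expand $N[f]$ in the basis $\{\phi_j\}$ using the invariance hypothesis, and equate coefficients by linear independence. The extra points you flag (uniqueness of the basis expansion and the dependence of each $\psi_j$ on the coefficients $K_j(t)$) are implicit in the paper's argument and do not alter the approach.
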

	\begin{proof} Consider L.H.S of FPDE (\ref{FPDE}). Using linearity of fractional derivatives and Eq. (\ref{U}), we get
	\begin{eqnarray}\label{LHS}
	\sum_{i=0}^{m}\lambda_i\dfrac{\partial ^{\alpha+i}f(x,t)}{\partial t^{\alpha+i}}&=&\sum_{i=0}^{m}\lambda_i\frac{\partial^{\alpha+i}}{\partial t^{\alpha+i}}\sum_{j=0}^{n}K_j(t) \phi_j(x)\nonumber\\
	&=&\sum_{j=0}^{n}\left[\sum_{i=0}^{m}\lambda_i\frac{\rm d^{\alpha+i}K_j(t)}{\rm dt^{\alpha+i}}\right] \phi_j(x).
	\end{eqnarray}	
	Further as $I_{n+1}$ is invariant space under the operator $N[f]$, there exist $n+1$ functions $\psi_0,\psi_1,\dots,\psi_n$ such that \begin{equation}\label{IN}
	N\left[\sum_{j=0}^{n}k_j \phi_j(x)\right]=\sum_{j=0}^{n}\psi_j(k_1,k_2,\dots,k_n)\phi_j(x),~\text{for}~ k_j\in \mathbb{R},\end{equation}
	where $\psi_j$'s are expansion coefficients of $N[f]\in I_{n+1}$ corresponding to basis functions $\phi_j$'s.\\
	In view of Eq. (\ref{U}) and Eq. (\ref{IN})
	\begin{equation}\label{RHS}
	N[f(x,t)]=N\left[\sum_{j=0}^{n}K_j(t)\phi_j(x)\right]=\sum_{j=0}^{n}\psi_j(K_1(t),\dots,K_n(t))\phi_j(x).
	\end{equation}
	Substituting Eq. (\ref{LHS}) and Eq. (\ref{RHS}) in Eq. (\ref{FPDE}), we get
	\begin{equation}\label{LHS=RHS}
	\sum_{j=0}^{n}\left[\sum_{i=0}^{m}\lambda_i\dfrac{\rm d^{\alpha+i}K_j(t)}{\rm dt^{\alpha+i}}-\psi_j(K_1(t),K_2(t),\dots,K_n(t))\right]\phi_j(x)=0.\end{equation}
	Using Eq. (\ref{LHS=RHS}) and the fact that $\phi_j$'s are linearly independent, we get the following system of FDEs
	\begin{equation*}
	\sum_{i=0}^{m}\lambda_i\dfrac{\rm d^{\alpha+i}K_j(t)}{\rm dt^{\alpha+i}}=\psi_j(K_1(t),K_2(t),\dots,K_n(t)),~j=0,1,\dots,n.
	\end{equation*} 
\end{proof}
	Consider the following fractional PDE 
	\begin{equation}\label{FPDE2}
	\sum_{i=1}^{m}\lambda_i\dfrac{\partial ^{i\alpha}f(x,t)}{\partial t^{i\alpha}}=N[f(x,t)]=N\left(x,f,\dfrac{\partial^{\beta}f}{\partial x^{\beta}},\dots,\dfrac{\partial^{k\beta}f}{\partial x^{k\beta}}\right),\end{equation}
	where $N[f]$ is linear/ nonlinear differential operator. $\dfrac{\partial^{i\alpha}f}{\partial t^{i\alpha}},~i=1,2,\dots,m$ and $\dfrac{\partial^{j\beta}f}{\partial x^{j\beta}}, ~j=1,2,\dots,k$ are Caputo Caputo time and space derivatives respectively. $\lfloor\alpha\rfloor= r,\lfloor \beta \rfloor =s, \textrm{where}~ r,s \in \mathbb{N}$ and $\lambda_i\in \mathbb{R}.$
	
	\begin{theorem}\label{THM2}
		If a finite dimensional linear space $I_n$ is invariant under the operator N[f], then FPDE (\ref{FPDE2}) has a solution of the form
		\begin{equation*}
		f(x,t)=\sum_{j=1}^{n}K_j(t)\phi_j(x),
		\end{equation*}where the coefficients $\{K_j\}$ satisfy the following system of FDEs
		\begin{equation*}
		\sum_{i=1}^{m}\lambda_i\dfrac{\rm d^{i\alpha}K_j(t)}{\rm dt^{i\alpha}}=\psi_j(K_1(t),K_2(t),\dots,K_n(t)),~j=1,2,\dots,n.
		\end{equation*}Here $\psi_j$'s are expansion coefficients of $N[f]\in I_n$ with respect to $\phi_j$'s.
	\end{theorem}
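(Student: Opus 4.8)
The plan is to mirror the proof of Theorem \ref{THM1} almost verbatim, since Equation (\ref{FPDE2}) is the special case of (\ref{FPDE}) in which the time-derivative orders are the integer multiples $\{i\alpha\}_{i=1}^{m}$ rather than the shifted orders $\{\alpha+i\}_{i=0}^{m}$, and the summation index starts at $1$ instead of $0$. First I would substitute the ansatz $f(x,t)=\sum_{j=1}^{n}K_j(t)\phi_j(x)$ into the left-hand side of (\ref{FPDE2}). Invoking linearity of the Caputo derivative in exactly the same way as in (\ref{LHS}), the spatial functions $\phi_j(x)$ pass outside the time-fractional operators, so that
\begin{equation*}
\sum_{i=1}^{m}\lambda_i\dfrac{\partial^{i\alpha}f(x,t)}{\partial t^{i\alpha}}=\sum_{j=1}^{n}\left[\sum_{i=1}^{m}\lambda_i\dfrac{\rm d^{i\alpha}K_j(t)}{\rm dt^{i\alpha}}\right]\phi_j(x).
\end{equation*}

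Next I would treat the right-hand side. Since $I_n=\mathfrak{L}\{\phi_1(x),\dots,\phi_n(x)\}$ is by hypothesis invariant under $N$, there exist coefficient functions $\psi_1,\dots,\psi_n$ such that $N\!\left[\sum_{j=1}^{n}k_j\phi_j(x)\right]=\sum_{j=1}^{n}\psi_j(k_1,\dots,k_n)\phi_j(x)$ for all real $k_j$; this is the analogue of (\ref{IN}) and is precisely the content of invariance. Evaluating $N$ at $f(x,t)=\sum_{j=1}^{n}K_j(t)\phi_j(x)$ therefore yields $N[f(x,t)]=\sum_{j=1}^{n}\psi_j(K_1(t),\dots,K_n(t))\phi_j(x)$, replicating (\ref{RHS}).

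Finally I would equate the two expansions and collect terms, giving
\begin{equation*}
\sum_{j=1}^{n}\left[\sum_{i=1}^{m}\lambda_i\dfrac{\rm d^{i\alpha}K_j(t)}{\rm dt^{i\alpha}}-\psi_j(K_1(t),\dots,K_n(t))\right]\phi_j(x)=0,
\end{equation*}
and then appeal to the linear independence of $\{\phi_j\}_{j=1}^{n}$ to conclude that each bracketed coefficient vanishes, which is exactly the asserted system of FDEs. There is no genuine obstacle here: the argument is structurally identical to Theorem \ref{THM1}, and the only points demanding care are bookkeeping ones—ensuring the index ranges ($i$ from $1$ to $m$, $j$ from $1$ to $n$) and the derivative orders $i\alpha$ are transcribed consistently throughout, and confirming that the invariance hypothesis is applied to the correct basis $\{\phi_j\}$. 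Since these are all routine, the proof reduces to citing the steps of Theorem \ref{THM1} with the reindexed data.
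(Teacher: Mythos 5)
Your proposal is correct and matches the paper exactly: the paper itself gives no separate argument for Theorem \ref{THM2}, stating only that its proof follows along the same lines as that of Theorem \ref{THM1}, which is precisely the reindexed substitution--invariance--linear-independence argument you spell out. The only substantive care points are the ones you already flag, namely keeping the index ranges ($i=1,\dots,m$; $j=1,\dots,n$) and the derivative orders $i\alpha$ consistent.
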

	Proof of Theorem \ref{THM2} can be given on similar lines as the proof of Theorem \ref{THM1}.

	\section{Illustrative Examples}
	In this section we solve some illustrative examples using Theorem \ref{THM1} and Theorem \ref{THM2}.
	\begin{example}
		Consider the following fractional PDE
		\begin{equation}\label{EX1}
		\dfrac{\partial ^\alpha f}{\partial t^ {\alpha}}+\dfrac{\partial ^{\alpha+1} f}{\partial t^ {\alpha+1}}+\dfrac{\partial ^{\alpha+2} f}{\partial t^ {\alpha+2}}=N[f]=\dfrac{\partial ^{\beta+1} f}{\partial x^ {\beta+1}},~~\alpha,~\beta \in (0,1].
		\end{equation}	
	\end{example}
	\noindent Clearly $I_2=\mathfrak{L}\{1,x^{\beta+1}\}$ is an invariant subspace as
	\begin{equation*}
	N[k_0+k_1x^{\beta+1}]=k_1\Gamma(\beta+2)\in I_2.
	\end{equation*}In view of Theorem \ref{THM1}, Eq. (\ref{EX1}) has exact solution of the form
	\begin{equation*}\label{SOL1}
	f(x,t)=K_0(t)+K_1(t)x^{\beta+1},
	\end{equation*} where $K_0(t)$ and $K_1(t)$ satisfy the following system of FDEs.
	\begin{eqnarray}
	\dfrac{\rm d ^\alpha K_0(t)}{\rm d t^ {\alpha}}+\dfrac{\rm d ^{\alpha+1} K_0(t)}{\rm d t^ {\alpha+1}}+\dfrac{\rm d ^{\alpha+2} K_0(t)}{\rm d t^ {\alpha+2}}&=&K_1(t)\Gamma(\beta+2),\label{11}\\
	\dfrac{\rm d ^\alpha K_1(t)}{\rm d t^ {\alpha}}+\dfrac{\rm d ^{\alpha+1} K_1(t)}{\rm d t^ {\alpha+1}}+\dfrac{\rm d ^{\alpha+2} K_1(t)}{\rm d t^ {\alpha+2}}&=&0.\label{12}
	\end{eqnarray}
	Using Laplace transform in Eq. (\ref{12}), we obtain
	\begin{align}
	\tilde{K_1}(s)&=\frac{K_1(0)(s^{\alpha+1}+s^{\alpha}+s^{\alpha-1})}{s^{\alpha+2}+s^{\alpha+1}+s^{\alpha}}+\frac{K_1'(0)(s^{\alpha}+s^{\alpha-1})}{s^{\alpha+2}+s^{\alpha+1}+s^{\alpha}}+\frac{K_1''(0)s^{\alpha-1}}{s^{\alpha+2}+s^{\alpha+1}+s^{\alpha}}\nonumber\\
	&=K_1(0)\left[\sum_{k=0}^{\infty}\frac{(-1)^ks^{-k}}{(s+1)^{k+1}}+\sum_{k=0}^{\infty}\frac{(-1)^ks^{-k-1}}{(s+1)^{k+1}}+\sum_{k=0}^{\infty}\frac{(-1)^ks^{-k-2}}{(s+1)^{k+1}}\right]+\nonumber\\
	&K_1'(0)\left[\sum_{k=0}^{\infty}\frac{(-1)^ks^{-k-1}}{(s+1)^{k+1}}+\sum_{k=0}^{\infty}\frac{(-1)^ks^{-k-2}}{(s+1)^{k+1}}\right]+K_1''(0)\sum_{k=0}^{\infty}\frac{(-1)^ks^{-k-2}}{(s+1)^{k+1}}.\nonumber
	\end{align}
	Taking inverse Laplace transform and using Eq. (\ref{D2}) we get
	\begin{eqnarray}\label{13}
	K_1(t)=\sum_{i=0}^{2}K_1^{(i)}(0)\left[\sum_{l=i}^{2}\sum_{k=0}^{\infty}\frac{(-1)^k}{k!}t^{2k+l}E^{(k)}_{1,k+l+1}(-t)\right],
	\end{eqnarray}
	where $E^{(n)}_{\alpha, \beta}(.)$ is defined in Eq. (\ref{D1}).\\
	Substituting value of $K_1(t)$ from Eq. (\ref{13}) in Eq. (\ref{11}) and following similar procedure we deduce
	\begin{align}\label{14}
	K_0(t)=\sum_{i=0}^{2}K_0^{(i)}(0)\left[\sum_{l=i}^{2}\sum_{k=0}^{\infty}\frac{(-1)^k}{k!}t^{2k+l}E^{(k)}_{1,k+l+1}(-t)\right]+\Gamma(\beta+2)\nonumber\\\left[
	\sum_{i=0}^{2}K_1^{(i)}(0)\left(\sum_{l=i+2}^{4}\sum_{k=0}^{\infty}\frac{(-1)^k}{k!}t^{2k+\alpha+l}E^{(k)}_{1,k+\alpha+l}(-t)\right)\right].
	\end{align}
	Hence an exact solution of Eq. (\ref{EX1}) is~$f(x,t)=K_0(t)+K_1(t)x^{\beta+1}$~where $K_0(t)$ and $K_1(t)$ are given by Eq. (\ref{14}) and Eq. (\ref{13}) respectively.

	
	\begin{example}
		Consider the following fractional generalization of time fractional Burgers' equation.
		\begin{equation}\label{EX2}
		\dfrac{\partial^\alpha f}{\partial t^\alpha}+f\dfrac{\partial^\beta f}{\partial x^\beta}-d\dfrac{\partial^\beta }{\partial x^\beta}\left(\dfrac{\partial^\beta f}{\partial x^\beta}\right)=0,~~\alpha\in(0,1),~\alpha\neq \frac{1}{2},~\beta\in (0,1],		
		\end{equation}where $d$ is diffusion constant.
	\end{example}
	\noindent Here $N[f]=-f\dfrac{\partial^\beta f}{\partial x^\beta}+d\dfrac{\partial^\beta }{\partial x^\beta}\left(\dfrac{\partial^\beta f}{\partial x^\beta}\right).~I_2=\mathfrak{L}\{1,x^\beta\}$ is an invariant subspace as $
	N[k_0+k_1x^\beta]=-k_0k_1\Gamma(\beta+1)-k_1^2\Gamma(\beta+1)x^\beta \in I_2.$\\
	Theorem \ref{THM1} implies an exact solution of the form
	\begin{equation*}\label{SOL21}
	f(x,t)=K_0(t)+K_1(t)x^\beta,
	\end{equation*} where $K_0(t)$ and $K_1(t)$ are unknown functions to be determined by solving following system of equations.
	\begin{eqnarray}
	\dfrac{\rm d^\alpha K_0(t)}{\rm d t^\alpha}&=&-K_0(t)K_1(t)\Gamma(\beta+1),\label{21}\\
	\dfrac{\rm d^\alpha K_1(t)}{\rm d t^\alpha}&=&-K_1(t)^2\Gamma(\beta+1).\label{22}
	\end{eqnarray}
	Assume solution of Eq. (\ref{22}) of the form $K_1(t)=\eta t^\gamma,$ where constant $\eta $ and the exponent $\gamma>-1$ can be found directly by substituting $K_1(t)=\eta t^\gamma$ in (\ref{22}), \textit{i.e.},
	\begin{equation}
	\dfrac{\rm d^\alpha K_1(t)}{\rm d t^\alpha}=\eta \dfrac{\Gamma(\gamma+1)}{\Gamma(\gamma-\alpha+1)}t^{\gamma-\alpha}=-\eta ^2t^{2\gamma}\Gamma(\beta+1).
	\end{equation}
	Thus $K_1(t)=\eta t^\gamma$ is a solution if $\gamma=-\alpha$ and $\eta =\dfrac{-\Gamma(1-\alpha)}{\Gamma(1+\beta)\Gamma(1-2\alpha)},$ \textit{i.e.},
	\begin{equation*}
	K_1(t)=-\dfrac{\Gamma(1-\alpha)}{\Gamma(1+\beta)\Gamma(1-2\alpha)}t^{-\alpha},~~\alpha \neq\frac{1}{2},~\alpha \neq 1.
	\end{equation*}
	On similar lines by solving Eq. (\ref{21}) we get
	$
	K_0(t)=at^{-\alpha},
	$	where $a$ is arbitrary constant. Therefore Eq. (\ref{EX2}) has following exact solution.
	\begin{equation*}
	f(x,t)=at^{-\alpha}-\left[\frac{\Gamma(1-\alpha)t^{-\alpha}}{\Gamma(1+\beta)\Gamma(1-2\alpha)}\right]x^{\beta}.
	\end{equation*}
	In particular, for $\beta=1$ we obtain\\
	\begin{equation*}
	f(x,t)=at^{-\alpha}-\left[\frac{\Gamma(1-\alpha)}{\Gamma(1-2\alpha)}t^{-\alpha}\right]x,
	\end{equation*} which is a solution of the time fractional Burgers equation \cite{sahadevan2016exact}.\\
	A particular case of fractional Burgers equation with diffusion constant $d=0$ has been studied by Harris and Garra \cite{harris2013analytic}, in which they have considered an exact solution of the form $f(x,t)=\textrm{(constant)}+C_1(t)x^\beta.$
	\begin{example}
		Consider the time and space fractional diffusion equation
		\begin{equation}\label{EX3}
		\dfrac{\partial^{\alpha} f}{\partial t^{\alpha}}=C\dfrac{\partial^\beta f}{\partial x^\beta},~~\alpha\in (0,1],~\beta \in (1,2].
		\end{equation}
	\end{example}
	\noindent Here $N[f]=C\dfrac{\partial^\beta f}{\partial x^\beta}.$ Subspace $I_3=\mathfrak{L}\{1,x^\beta,x^{2\beta}\}$ is invariant under N[f] as
	\begin{equation*}
	N[k_0+k_1x^\beta+k_2x^{2\beta}]=Ck_1\Gamma(\beta+1)+Ck_2\frac{\Gamma(2\beta+1)}{\Gamma(\beta+1)}x^\beta\in I_3.
	\end{equation*}
	Consider an exact solution of the form	$
	f(x,t)=K_0(t)+K_1(t)x^\beta+K_2(t)x^{2\beta},
	$ where $K_0(t)$ and $K_1(t)$ satisfy the following system of FDEs:
	\begin{eqnarray}
	\dfrac{\rm d^\alpha K_0(t)}{\rm d t^\alpha}&=&C~\Gamma(\beta+1)K_1(t)\label{31},\\
	\dfrac{\rm d^\alpha K_1(t)}{\rm d t^\alpha}&=&\frac{C~\Gamma(2\beta+1)}{\Gamma(\beta+1)}K_2(t)\label{32},\\
	\dfrac{\rm d^\alpha K_2(t)}{\rm d t^\alpha}&=&0.\label{33}
	\end{eqnarray}
	Eq. (\ref{33}) implies that $K_2(t)=a$ (say). So Eq. (\ref{32}) takes the form
	\begin{equation*}
	\dfrac{\rm d^\alpha K_1(t)}{\rm d t^\alpha}=aC\dfrac{\Gamma(2\beta+1)}{\Gamma(\beta+1)},
	\end{equation*}which has the following solution:
	\begin{eqnarray*}
		K_1(t)&=&K_1(0)+\frac{aC~\Gamma(2\beta+1)}{\Gamma(\beta+1)\Gamma(\alpha+1)}t^\alpha.\end{eqnarray*}
	Similarly Eq. (\ref{31}) yields
	\begin{eqnarray*} K_0(t)&=&K_0(0)+K_1(0)\frac{C~\Gamma(\beta+1)}{\Gamma(\alpha+1)}t^\alpha +\frac{aC^2~\Gamma(2\beta +1)}{\Gamma(2\alpha+1)} t^{2\alpha}.
	\end{eqnarray*}
	Hence Eq. (\ref{EX3}) has the following solution
	\begin{eqnarray}\label{SOL32}
	f(x,t)=\left[d+\frac{bC~\Gamma(\beta+1)}{\Gamma(\alpha+1)}t^\alpha +\frac{aC^2~\Gamma(2\beta +1)}{\Gamma(2\alpha+1)} t^{2\alpha}\right]\nonumber\\
	+\left[b+\frac{aC~\Gamma(2\beta+1)}{\Gamma(\beta+1)\Gamma(\alpha+1)}t^\alpha\right]x^\beta+ax^{2\beta},	
	\end{eqnarray}
	where~$a, b$ and $d$ are arbitrary constants.\\	
	
	\noindent It can  be easily verified that $I_2=\mathfrak{L}\{1,E_\beta (x^\beta )\}$ is also an invariant subspace with respect to $N[f]$ leading to a different solution having the form 
	\begin{equation*}
	f(x,t)=K_0(t)+K_1(t)E_\beta (x^\beta ),
	\end{equation*}where unknown functions $K_0(t)$ and $K_1(t)$ can be found by solving the following system of equations:
	\begin{eqnarray}
	\dfrac{\rm d^\alpha K_0(t)}{\rm d t^\alpha}&=&0\label{34},\\
	\dfrac{\rm d^\alpha K_1(t)}{\rm d t^\alpha}&=&CK_1(t).\label{35}
	\end{eqnarray}
	Solving the system of equations (\ref{34})-(\ref{35}), we obtain an exact solution of Eq. (\ref{EX3}) associated with $I_2=\mathfrak{L}\{1,E_\beta (x^\beta )\}$ as
	\begin{equation}\label{SOL33}
	f(x,t)=a+[bE_\alpha (Ct^\alpha )]E_\beta (x^\beta ),
	\end{equation}where $a$ and $b$ are arbitrary constants.\\
	
	\noindent Note that Eq. (\ref{EX3}) also admits another invariant subspace $I_2=\mathfrak{L}\{1,x^\beta\}$, which yields to the exact solution of Eq. (\ref{EX3}):
	\begin{equation}\label{SOL34}
	f(x,t)=a+\frac{bC\Gamma(\beta+1)}{\Gamma(\alpha+1)}t^\alpha+bx^{\beta},	~~a, b\in \mathbb{R}.
	\end{equation}
	Thus (\ref{SOL32}), (\ref{SOL33}) and (\ref{SOL34}) are three distinct exact solutions of Eq. (\ref{EX3}) corresponding to three distinct invariant subspaces. Note that $I_{n+1}=\mathfrak{L}\{1,x^\beta,\dots,x^{n\beta}\},$
	$~n\in \mathbb{N}$ is yet another invariant subspace for (\ref{EX3}). Thus we get infinitely many invariant subspaces for the problem under consideration, which in turn yield infinitely many solutions.\\
	
	\textbf{Solutions pertaining to initial conditions}\\
	
	\noindent 	Invariant subspace method is also useful in finding closed form solutions of fractional PDEs satisfying initial conditions. Observe that Eq. (\ref{EX3}) along with the initial condition \begin{equation*}
	f(x,0)=a_0+a_1x^\beta+a_2x^{2\beta}+\cdots+a_nx^{n\beta},~1<\beta \leq 2,~a_i \in \mathbb{R},~i=0,1,\dots n,
	\end{equation*}has closed form solutions.\\
	
	For example, consider the time and space fractional diffusion equation (\ref{EX3}) along with the  initial condition,
	\begin{equation}\label{IC31}
	f(x,0)=\dfrac{3x^\beta}{\Gamma(\beta+1)}.
	\end{equation}
	For solving this IVP, consider the invariant subspace $I_2=\mathfrak{L}\{1,x^\beta\}$. Using initial condition (\ref{IC31}) in Eq. (\ref{SOL34}) we obtain $a=0$ and $b=\dfrac{3}{\Gamma(\beta+1)}$.\\
	Thus we obtain the following exact solution of the IVP (\ref{EX3})-(\ref{IC31}).
	\begin{equation}\label{SOL35}
	f(x,t)=\dfrac{3Ct^\alpha}{\Gamma(\alpha+1)}+\dfrac{3x^\beta}{\Gamma(\beta+1)}.
	\end{equation}
	Note that the solution (\ref{SOL35}) coincides with the solution obtained by NIM introduced by Daftardar-Gejji \textit{et al.} \cite{daftardar2008solving}.\\
	Now consider (\ref{EX3}) together with the initial condition,
	\begin{equation}\label{IC32}
	f(x,0)=1-\dfrac{x^{2\beta}}{2}.
	\end{equation}Using the invariant subspace $I_3=\mathfrak{L}\{1,x^\beta,x^{2\beta}\}$, we get the following exact solution of the IVP (\ref{EX3}) - (\ref{IC32}).
	\begin{equation*}
	f(x,t)=\left[1-\frac{C^2~\Gamma(2\beta +1)}{2~\Gamma(2\alpha+1)} t^{2\alpha}\right]-\left[\frac{C~\Gamma(2\beta+1)}{2~\Gamma(\beta+1)\Gamma(\alpha+1)}t^\alpha\right]x^\beta-\dfrac{1}{2}x^{2\beta}.	
	\end{equation*} 
	%
	\begin{example}
		Consider the following fractional generalization of telegraph equation:
		\begin{equation}\label{EX4}
		\dfrac{\partial ^{\alpha+1} f}{\partial t^ {\alpha+1}}+\dfrac{\partial ^\alpha f}{\partial t^ {\alpha}}=\dfrac{\partial ^{\beta} f}{\partial x^ {\beta}}-f,~~\alpha \in (0,1],~\beta \in (1,2].
		\end{equation}	
	\end{example}
	\noindent Note that for $I_2=\mathfrak{L}\{1,E_\beta (x^\beta)\}$,
	\begin{equation*}
	N[k_0+k_1E_\beta (x^\beta)]=k_1E_\beta (x^\beta)-k_0-k_1E_\beta (x^\beta)=-k_0\in I_2.
	\end{equation*}This implies that $I_2$ is an invariant subspace, and in view of Theorem \ref{THM1}, Eq. (\ref{EX4}) has an exact solution of the form
	\begin{equation}\label{SOL41}
	f(x,t)=K_0(t)+K_1(t)E_\beta (x^\beta),
	\end{equation} where $K_0(t)$ and $K_1(t)$ satisfy the following system of FDEs.
	\begin{align}
	\dfrac{\rm d ^{\alpha+1} K_0(t)}{\rm d t^ {\alpha+1}}+\dfrac{\rm d ^\alpha K_0(t)}{\rm d t^ {\alpha}}&=-K_0(t),\label{41}\\
	\dfrac{\rm d ^{\alpha+1} K_1(t)}{\rm d t^ {\alpha+1}}+\dfrac{\rm d ^\alpha K_1(t)}{\rm d t^ {\alpha}}&=0.\label{42}
	\end{align}
	Applying Laplace transform technique to Eq. (\ref{41}) we get
	\begin{eqnarray*}
		\tilde{K_0}(s)=K_0(0)\sum_{k=0}^{\infty}\frac{(-1)^ks^{-\alpha k}}{(1+s)^{k+1}}+[K_0(0)+K_0'(0)]\sum_{k=0}^{\infty}\frac{(-1)^ks^{-\alpha k-1}}{(1+s)^{k+1}}.
	\end{eqnarray*}
	Taking inverse Laplace transform we obtain
	\begin{align*}
	K_0(t)=~~&	K_0(0)\sum_{k=0}^{\infty}\frac{(-1)^k}{k!}t^{k(\alpha+1)}E_{1,\alpha k +1}^{(k)}(-t)+[K_0(0)+K_0'(0)]\\
	&\sum_{k=0}^{\infty}\frac{(-1)^k}{k!}t^{k(\alpha+1)+1}E_{1,\alpha k +2}^{(k)}(-t).
	\end{align*}
	Proceeding on similar lines, from Eq. (\ref{42}) we deduce
	\begin{equation*}
	K_1(t)=K_1(0)+K_1'(0)tE_{1,2}(-t).
	\end{equation*}
	Thus an exact solution (\ref{SOL41}) takes the following form
	\begin{eqnarray*}
		f(x,t)&=&a_1\sum_{k=0}^{\infty}\frac{(-1)^k}{k!}t^{k(\alpha+1)}E_{1,\alpha k +1}^{(k)}(-t)+[a_1+a_2]\sum_{k=0}^{\infty}\frac{(-1)^k}{k!}\\
		&&t^{k(\alpha+1)+1}E_{1,\alpha k +2}^{(k)}(-t)+[b_1+b_2tE_{1,2}(-t)]E_\beta(x^\beta),
	\end{eqnarray*}where $a_1, a_2, b_1$ and $b_2$ are arbitrary constants.  
	\begin{example}
		Consider the following fractional IVP:
		\begin{eqnarray}
		\dfrac{\partial^\alpha f}{\partial t^\alpha}&=&\left(\dfrac{\partial^\beta f}{\partial x^\beta }\right)^2-f\left(\dfrac{\partial^\beta  f}{\partial x^\beta }\right),~~\alpha,~\beta  \in (0,1],\label{EX6}\\
		f(x,0)&=&3+\frac{5}{2}E_\beta (x^\beta )\label{IC6}.                                    
		\end{eqnarray}
	\end{example}		
	\noindent$I_2=\mathfrak{L}\{1,E_\beta (x^\beta )\}$ is invariant under $N[f]=\left(\dfrac{\partial^\beta f}{\partial x^\beta }\right)^2-f\left(\dfrac{\partial^\beta  f}{\partial x^\beta }\right)$ as
	\begin{equation*}
	N[k_0+k_1E_\beta (x^\beta )]=-k_0k_1E_\beta (x^\beta )\in I_2.
	\end{equation*}
	We consider solution of the form
	\begin{equation}\label{SOL61}
	f(x,t)=K_0(t)+K_1(t)E_\beta (x^\beta ),
	\end{equation} where $K_0(t)$ and $K_1(t)$ are unknown functions to be determined.\\
	By substituting (\ref{SOL61}) in (\ref{EX6}) and equating coefficients of different powers of $x$, we get the following system of FDEs.
	\begin{eqnarray}
	\dfrac{\rm d^\alpha K_0(t)}{\rm d t^\alpha}&=&0,\label{61}\\
	\dfrac{\rm d^\alpha K_1(t)}{\rm d t^\alpha}&=&-K_0(t)K_1(t).\label{62}
	\end{eqnarray}
	Solving Eq. (\ref{61}) we get $K_0(t)=K_0(0)=a$ (say), and Eq. (\ref{62}) takes the form $\dfrac{d^\alpha K_1(t)}{d t^\alpha}=-aK_1(t)$.
	Using Laplace transform technique we obtain
	$K_1(t)=K_1(0)E_\alpha(-at^\alpha).$ Thus the exact solution of (\ref{EX6}) along with the initial condition (\ref{IC6}) turns out to be
	\begin{equation*}
	f(x,t)=3+\left[\frac{5}{2}E_\alpha(-3t^\alpha)\right]E_\beta (x^\beta ).
	\end{equation*}
	
	\begin{example}
		Consider the following two generalization of wave equation with constant absorption term\\
		\textbf{Type (I)}
		\begin{eqnarray}\label{EX7}
		\dfrac{\partial^{2\alpha} f}{\partial t^{2\alpha}}=\dfrac{\partial^\beta }{\partial x^\beta}\left(f\dfrac{\partial^\beta f}{\partial x^\beta}\right)-1,~~\alpha,~\beta\in (0,1],\\
		f(x,0)=e+\dfrac{x^\beta}{\Gamma(\beta+1)},~f_t(x,0)=1-x^\beta.\label{IC7}
		\end{eqnarray}
	\end{example}
	\noindent Here $N[f]=\dfrac{\partial^\beta }{\partial x^\beta}\left(f\dfrac{\partial^\beta f}{\partial x^\beta}\right)-1.$ Clearly $I_2=\mathfrak{L}\{1,x^\beta\}$ is an invariant subspace of N[f] as
	\begin{equation*}
	N[k_0+k_1x^\beta]=k_1^2[\Gamma(\beta+1)]^2-1\in I_2.
	\end{equation*}
	In view of Theorem \ref{THM2}, we look for an exact solution of the form
	\begin{equation*}\label{SOL7}
	f(x,t)=K_0(t)+K_1(t)x^\beta,
	\end{equation*} where $K_0(t)$ and $K_1(t)$ satisfy the following system of FDEs.
	\begin{eqnarray}
	\dfrac{\rm d^{2\alpha }K_0(t)}{\rm d t^{2\alpha }}&=&K_1^2(t)[\Gamma(\beta+1)]^2-1,\label{71}\\
	\dfrac{\rm d^{2\alpha } K_1(t)}{\rm d t^{2\alpha }}&=&0.\label{72}
	\end{eqnarray}
	Solving the system of FDEs (\ref{71})-(\ref{72}) we get the following exact solution
	
	\begin{align}\label{SOL71}
	f(x,t)=\left\{\begin{array}{lcr}
	\left[a_1+\left(\dfrac{b_1^2[\Gamma(\beta+1)]^2-1}{\Gamma(2\alpha+1)}\right)t^{2\alpha}\right]+b_1x^\beta,\hskip 1.5cm\mbox{if~ $0<\alpha\leq \dfrac{1}{2},$}\vspace{.4cm}\\
	\left[a_1+a_2t+\left(\dfrac{b_1^2[\Gamma(\beta+1)]^2-1}{\Gamma(2\alpha+1)}\right)t^{2\alpha}+\left(\dfrac{2b_2^2[\Gamma(\beta+1)]^2}{\Gamma(2\alpha+3)}\right)t^{2\alpha+2}\right.\\
	\left.+\dfrac{2b_1b_2[\Gamma(\beta+1)]^2}{\Gamma(2\alpha+2)}t^{2\alpha+1}\right]+\left[b_1+b_2t\right]x^\beta,\hskip 1.3cm\mbox{if~ $\dfrac{1}{2}<\alpha\leq 1$},
	\end{array}\right.
	\end{align}where $a_1,~a_2,~b_1$ and $b_2$ are arbitrary constants.\\
	Using initial conditions (\ref{IC7}) we get $a_1=e,a_2=1,b_1=\dfrac{1}{\Gamma(\beta+1)}$ and $b_2=-1.$	Hence the  exact solution of the IVP (\ref{EX7})-(\ref{IC7}) is
	
	\begin{eqnarray*}
		f(x,t)=\left\{\begin{array}{lcr}
			e+\dfrac{x^\beta}{\Gamma(\beta+1)},\hskip 5.7cm\mbox{if~ $0<\alpha\leq \dfrac{1}{2},$}\vspace{.4cm}\\
			\left[e+t+\dfrac{2[\Gamma(\beta+1)]^2}{\Gamma(2\alpha+3)}t^{2\alpha+2}-\dfrac{2\Gamma(\beta +1)}{\Gamma(2\alpha+2)}t^{2\alpha+1}\right]\\
			+\left[\dfrac{1}{\Gamma(\beta+1)}-t\right]x^\beta,\hskip4.5cm\mbox{if~ $\dfrac{1}{2}<\alpha\leq 1$}.
		\end{array}\right.
	\end{eqnarray*}\\
	\textbf{Type (II):}
	\noindent Consider another fractional generalization of  wave equation with constant absorption term
	\begin{equation}\label{EX72}
	\dfrac{\partial^{\alpha+1} f}{\partial t^{\alpha+1}}=N[f]=\dfrac{\partial^\beta }{\partial x^\beta}\left(f\dfrac{\partial^\beta f}{\partial x^\beta}\right)-1,~~\alpha,~\beta\in (0,1].
	\end{equation}
	From the discussion of Case (I), we know that $I_2=\mathfrak{L}\{1,x^\beta\}$ is an invariant subspace of $N[f]$. By Theorem \ref{THM1}, Eq. (\ref{EX72}) has the following solution.
	\begin{equation*}
	f(x,t)=K_0(t)+K_1(t)x^\beta,
	\end{equation*} where unknown functions $K_0(t)$ and $K_1(t)$ satisfy the following system of FDEs.
	\begin{align*}
	\dfrac{\rm d^{\alpha +1}K_0(t)}{\rm d t^{\alpha +1 }}&=K_1^2(t)[\Gamma(\beta+1)]^2-1,\\
	\dfrac{\rm d^{\alpha +1 } K_1(t)}{\rm d t^{\alpha +1 }}&=0.
	\end{align*}
	Solving these FDEs we obtain the following solution of Eq. (\ref{EX72}).
	\begin{align}\label{SOL72}
	f(x,t)=\left[a_1+a_2t+\left(\dfrac{b_1^2[\Gamma(\beta+1)]^2-1}{\Gamma(\alpha+2)}\right)t^{\alpha+1}+\left(\dfrac{2b_1b_2[\Gamma(\beta+1)]^2}{\Gamma(\alpha+3)}\right)t^{\alpha+2}\right.\nonumber\hskip-1.5cm\\
	\left.	+\left(\dfrac{2b_2^2[\Gamma(\beta+1)]^2}{\Gamma(\alpha+4)}\right)t^{\alpha+3}\right]+[b_1+b_2 t]x^\beta,
	\end{align}
	where $a_1, a_2, b_1$ and $b_2$ are arbitrary constants.\\
	
	\textbf{Note:} For $\alpha=1=\beta$, solutions (\ref{SOL71}) and (\ref{SOL72}) of two different fractional generalizations (\ref{EX7}) and (\ref{EX72}) respectively, reduce to
	\begin{align*}
	f(x,t)=a_1+a_2t+\left(\dfrac{b_1^2-1}{2}\right)t^{2}+\dfrac{b_1b_2}{3}t^{3}+\dfrac{b_2^2}{12}t^{4}+[b_1+b_2 t]x,
	\end{align*}
	which is a solution of the ordinary wave equation  with constant absorption term
	\begin{equation*}
	\dfrac{\partial^{2} f}{\partial t^{2}}=\dfrac{\partial^\beta }{\partial x^\beta}\left(f\dfrac{\partial^\beta f}{\partial x^\beta}\right)-1.
	\end{equation*}
	
	\begin{example}
		Consider the following fractional generalization of Korteweg–de Vries equation (KdV) equation:
		\begin{equation}\label{EX8}
		\dfrac{\partial^{\alpha} f}{\partial t^{\alpha}}-6f\dfrac{\partial^\beta f}{\partial x^\beta}+\dfrac{\partial ^{2\beta}}{\partial x^{2\beta}}\left(\dfrac{\partial^\beta f}{\partial x^\beta}\right)=0,~\alpha\in (0,1) \backslash\{1/2\},~\beta\in (0,1].
		\end{equation}
	\end{example}
	\noindent Here $N[f]=6f\dfrac{\partial^\beta f}{\partial x^\beta}-\dfrac{\partial ^{2\beta}}{\partial x^{2\beta}}\left(\dfrac{\partial^\beta f}{\partial x^\beta}\right).$ $I_2=\mathfrak{L}\{1,x^\beta\}$ is an invariant subspace as
	\begin{equation*}
	N[k_0+k_1x^\beta]=6k_0k_1\Gamma(\beta+1)6k_1^2\Gamma(\beta+1)x^\beta \in I_2.
	\end{equation*}
	Hence in view of Theorem \ref{THM2}, Eq. (\ref{EX8}) has solution of the form\\
	$
	f(x,t)=K_0(t)+K_1(t)x^\beta,$ where $K_0(t)$ and $K_1(t)$ satisfy the following system of FDEs.
	\begin{eqnarray}
	\dfrac{\rm d^{\alpha }K_0(t)}{\rm d t^{\alpha }}&=&6K_0K_1\Gamma(\beta+1),\label{81}\\
	\dfrac{\rm d^{\alpha }K_1(t)}{\rm d t^{\alpha }}&=&6K_1^2\Gamma(\beta+1).\label{82}
	\end{eqnarray}
	Solving (\ref{81})-(\ref{82}) we get
	\begin{eqnarray*}
		K_0(t)=at^{-\alpha},
		K_1(t)=\frac{\Gamma(1-\alpha)t^{-\alpha}}{6\Gamma(1+\beta)\Gamma(1-2\alpha)}.
	\end{eqnarray*}Thus solution of Eq. (\ref{EX8}) becomes
	\begin{equation*}
	f(x,t)=at^{-\alpha}+\left[\frac{\Gamma(1-\alpha)t^{-\alpha}}{6\Gamma(1+\beta)\Gamma(1-2\alpha)}\right]x^{\beta},~a\in \mathbb{R}.
	\end{equation*}

	\begin{example}Consider the following non-linear space-time fractional dispersion equation.
		\begin{equation}\label{EX9}
		\dfrac{\partial^\alpha f}{\partial t^\alpha}=\dfrac{\partial^{\beta} }{\partial x^{\beta}}\left(	\dfrac{\partial^{\beta} }{\partial x^{\beta}}\left(\dfrac{\partial ^\beta }{\partial x^\beta}\left(\dfrac{f^2}{2}\right)\right)\right),~~\alpha,~\beta\in (0,1].
		\end{equation}
	\end{example}
	\noindent Here $I_2=\mathfrak{L}\{1,x^\beta,x^{2\beta}\}$ is an invariant subspace as
	\begin{equation*}
	N[k_0+k_1x^\beta,k_2x^{2\beta}]=\dfrac{\Gamma(4\beta+1)}{2~ \Gamma(\beta+1)}k_2^2x^{\beta}+k_1k_2\Gamma(3\beta+1)\in I_2.
	\end{equation*}
	We look for an exact solution of the form
	\begin{equation}\label{SOL9}
	f(x,t)=K_0(t)+K_1(t)x^\beta+K_2(t)x^{2\beta},
	\end{equation} where $K_0(t),~K_1(t)$ and $K_2(t)$ are unknown functions that are to be determined by solving the following system of FDEs.
	\begin{eqnarray}
	\dfrac{\rm d^\alpha K_0(t)}{\rm d t^\alpha}&=&\Gamma(3\beta+1)K_1(t)K_2(t)\label{91},\\
	\dfrac{\rm d^\alpha K_1(t)}{\rm d t^\alpha}&=&\dfrac{\Gamma(4\beta+1)}{2~ \Gamma(\beta+1)}K_2(t)^2,\label{92}\\
	\dfrac{\rm d^\alpha K_2(t)}{\rm d t^\alpha}&=&0.\label{93}
	\end{eqnarray}
	Eq.(\ref{93}) implies that $K_0(t)=$ constant $=c$ (say). Hence Eq. (\ref{92}) takes the form $	\dfrac{\rm d^\alpha K_1(t)}{\rm d t^\alpha}=c^2\dfrac{\Gamma(4\beta+1)}{2~ \Gamma(\beta+1)}$. Performing fractional integration on both sides, we obtain $K_1(t)=K_1(0)+\dfrac{c^2\Gamma(4\beta+1)}{2~ \Gamma(\beta+1)\Gamma(\alpha+1)}t^\alpha$. Proceeding on similar lines we get
	\begin{equation*}
	K_0(t)=K_0(0)+\dfrac{K_1(0)c\Gamma(3\beta+1)}{\Gamma(\alpha+1)}t^\alpha+\dfrac{c^3\Gamma(4\beta+1)\Gamma(3\beta+1)}{2~ \Gamma(\beta+1)\Gamma(2\alpha+1)}t^{2\alpha}.
	\end{equation*}
	Hence an exact solution of (\ref{EX9}) is
	\begin{eqnarray*}
		f(x,t)=\left[a+\dfrac{bc\Gamma(3\beta+1)}{\Gamma(\alpha+1)}t^\alpha+\dfrac{c^3\Gamma(4\beta+1)\Gamma(3\beta+1)}{2~ \Gamma(\beta+1)\Gamma(2\alpha+1)}t^{2\alpha}\right]\\
		+
		\left[b+\dfrac{c^2\Gamma(4\beta+1)}{2~ \Gamma(\beta+1)\Gamma(\alpha+1)}t^\alpha\right] x^\beta
		+cx^{2\beta},
	\end{eqnarray*}where $ a,~b$ and $c$ are arbitrary constants.\\
	
	Third order nonlinear time-fractional dispersion equation (corresponding to $\beta=1$) was studied in \cite{harris2014nonlinear} and the original model (corresponding to $\alpha=\beta=1$) was investigated by Galaktionov and Pohozaev \cite{galaktionov2008third}.
	\begin{example}
		Consider the following fractional version of non linear heat equation:
		\begin{equation}\label{EX10}
		\dfrac{\partial^{\alpha} f}{\partial t^{\alpha}}=N[f]=\dfrac{\partial^{\beta} }{\partial x^{\beta} }\left(f\dfrac{\partial^{\beta} f}{\partial x^{\beta}}\right),~~\alpha,~\beta\in (0,1].
		\end{equation}
	\end{example}
	\noindent$I_2=\mathfrak{L}\{1,x^{\beta}\}$ is an invariant subspace of N[f] as
	\begin{equation*}
	N[k_0+k_1x^{\beta}]=\dfrac{\partial^{\beta} }{\partial x^{\beta} }\left(k_0k_1\Gamma(\beta +1) +k_1^2\Gamma(\beta +1)x^{\beta}\right)=k_1^2[\Gamma(\beta+1)]^2 \in I_2.
	\end{equation*}
	We consider exact solution of the form
	\begin{equation}\label{SOL10}
	f(x,t)=K_0(t)+K_1(t)x^{\beta},
	\end{equation} such that
	\begin{eqnarray}
	\dfrac{\rm d^\alpha K_0(t)}{\rm d t^\alpha}&=&K_1(t)^2[\Gamma(\beta+1)]^2,\label{101}\\
	\dfrac{\rm d^\alpha K_1(t)}{\rm d t^\alpha}&=&0.\label{102}
	\end{eqnarray}
	Solving the system of FDEs given by (\ref{101})-(\ref{102}) we obtain
	\begin{align*}
	K_1(t)&=b,\\ K_0(t)&=K_0(0)+b^2\dfrac{[\Gamma(\beta+1)]^2}{\Gamma(\alpha+1)}t^\alpha.
	\end{align*}
	Substituting values of $K_0(t)$ and $K_1(t)$ in Eq. (\ref{SOL10}) we get an exact solution as
	\begin{equation*}
	f(x,t)=\left[a+b^2\dfrac{[\Gamma(\beta+1)]^2}{\Gamma(\alpha+1)}t^\alpha\right] +bx^{\beta},~a, b \in \mathbb{R}.
	\end{equation*}

	\begin{example}
		Consider the following fractional PDE:
		\begin{equation}\label{EX11}
		\dfrac{\partial^{\alpha} u}{\partial t^{\alpha}}=\dfrac{\partial^{\beta} u}{\partial x^{\beta}}+\left(\dfrac{\partial^{\beta} u}{\partial x^{\beta}}+u\right)^{-1},~~\alpha,~\beta\in (0,1].
		\end{equation}
	\end{example}
	\noindent Here $N[u]=\dfrac{\partial^{\beta} u}{\partial x^{\beta}}+\left(\dfrac{\partial^{\beta} u}{\partial x^{\beta}}+u\right)^{-1}.~I_2=\mathfrak{L}\{1,E_\beta(-x^\beta)\}$ is an invariant subspace as $
	N[k_0+k_1E_\beta(-x^\beta)]=-k_1E_\beta(-x^\beta)+\dfrac{1}{k_0}\in I_2,~k_0\neq 0.$\\
	Consider the exact solution of the form
	\begin{equation}\label{SOL11}
	f(x,t)=K_0(t)+K_1(t)E_\beta(-x^\beta),~K_0(t)\neq 0,
	\end{equation} where $K_0(t)$ and $K_1(t)$ satisfy the following system of FDEs
	\begin{eqnarray}
	\dfrac{\rm d^{\alpha} K_0(t)}{\rm d t^\alpha}&=&\frac{1}{K_0(t)},\label{111}\\
	\dfrac{\rm d^{\alpha} K_1(t)}{\rm d t^\alpha}&=&-K_1(t).\label{112}
	\end{eqnarray}
	Assuming that solution of Eq. (\ref{111}) is of the form $K_0(t)=\eta t^\gamma$ we get $K_0(t)=\pm\left(\frac{\Gamma(1-\alpha/2)}{\Gamma(1+\alpha/2)}\right)^{1/2}t^\frac{\alpha}{2}$ and using Laplace transform technique for Eq. (\ref{112}) we deduce $K_1(t)=K_1(0)E_{\alpha}(-t^\alpha)$ which leads to the following solution of Eq. (\ref{EX11}).
	\begin{equation*}
	f(x,t)= \left[\pm\left(\frac{\Gamma(1-\alpha/2)}{\Gamma(1+\alpha/2)}\right)^{1/2}t^{\frac{\alpha}{2}}\right]+[bE_{\alpha}(-t^\alpha)]E_\beta(-x^\beta),~b=K_1(0).
	\end{equation*}


	\section{Conclusions}
	Present article develops invariant subspace method for solving nonlinear time and space fractional partial differential equations. In this method FPDEs are reduced to systems of fractional differential equations which can be solved by existing analytic methods. In general fractional PDEs admit more than one invariant subspace, each of which yields an exact solution. In fact linear fractional PDEs admit infinitely many invariant subspaces (cf. (\ref{EX1}) and  (\ref{EX3})). Invariant subspace method is also used to derive closed form solutions of time and space fractional PDEs along with certain kinds of initial conditions. It is observed that invariant subspace method is very effective and rigorous tool for finding exact solutions of wide class of linear/ non linear FPDEs.
	
	\section*{Acknowledgements}
	
	Sangita Choudhary acknowledges the National Board for Higher Mathematics, India, for the award of Research Fellowship.
	

\begin{thebibliography}{99}
		\normalsize
		
		
		\bibitem{adomian2013solving}
		G.~Adomian,	\emph{Solving Frontier Problems of Physics: The Decomposition Method}. Springer Science and Business Media, Netherland \textbf{60} (2013).
		
		\bibitem{daftardar2008solving}
		V.~Daftardar-Gejji and S.~Bhalekar, Solving fractional diffusion-wave equations using a new iterative
		method. \emph{Fract. Calc. Appl. Anal.} \textbf{11}, No 2 (2008), 193--202.
		
		\bibitem{daftardar2008solvingmulti}
		V.~Daftardar-Gejji and S.~Bhalekar, Solving multi-term linear and non-linear diffusion--wave equations of
		fractional order by adomian decomposition method.
		\emph{Appl. Math. Comput.} \textbf{202}, No 1 (2008), 113--120. 
		
		\bibitem{debnath2003recent}
		L.~Debnath, Recent applications of fractional calculus to science and
		engineering. \emph{Internat. J. Math. Math. Sci.} \textbf{2003}, No 54 (2003), 3413--3442.
		
		\bibitem{diethelm2010analysis}
		K.~Diethelm, \emph{The Analysis of Fractional Differential Equations}. Springer, New York (2010).
		
		\bibitem{galaktionov1995invariant}
		V.~A. Galaktionov, Invariant subspaces and new explicit solutions to evolution equations	with quadratic nonlinearities. \emph{ Proc. Roy. Soc. Edin. Sec. A} \textbf{125}, No 2 (1995), 225--246.
		
		\bibitem{galaktionov2006exact}
		V.~A. Galaktionov and S.~R. Svirshchevskii, \emph{Exact Solutions and Invariant Subspaces of Nonlinear Partial Differential Equations in Mechanics and Physics}. CRC Press, London (2006).
		
		\bibitem{galaktionov2008third}
		V.~A. Galaktionov and S.~Pohozaev, Third-order nonlinear dispersive equations: Shocks, rarefaction, and	blowup waves.\emph{ Comput. Math. Math. Phys.} \textbf{48}, No 10 (2008), 1784--1810.
		
		\bibitem{gazizov2013construction}
		R.~Gazizov and A.~Kasatkin, Construction of exact solutions for fractional order differential equations by the invariant subspace method. \emph{Comput. Math. Appl.} \textbf{66}, No 5 (2013), 576--584.
		
		\bibitem{harris2013analytic}
		P.~A. Harris and R.~Garra, Analytic solution of nonlinear fractional burgers-type equation by invariant subspace method. \emph{Nonlinear Stud.} \textbf{20}, No 4 (2013), 471-481
		
		\bibitem{harris2014nonlinear}
		P.~A. Harris and R.~Garra, Nonlinear time-fractional dispersive equations. \emph{Commun. Appl. Indus. Math.} \textbf{6}, No 1 (2014).
		
		\bibitem{he1999homotopy}
		J.~H. He, Homotopy perturbation technique. \emph{Comput. Methods Appl. Mech. Engirg.} \textbf{178}, No 3 (1999), 257--262.
		
		\bibitem{ma2012refined}
		W.~X. Ma, A refined invariant subspace method and applications to evolution equations. \emph{Sci. China Math.} \textbf{55}, No 9 (2012), 1769--1778.
		
		\bibitem{podlubny1998fractional}
		I.~Podlubny, \emph{Fractional differential equations}. Academic press, New york (1998).
		
		\bibitem{sahadevan2015invariant}
		R.~Sahadevan and T.~Bakkyaraj, Invariant subspace method and exact solutions of certain nonlinear time fractional partial differential equations. \emph{Fract. Calc. Appl. Anal.} \textbf{18}, No 1 (2015), 146--162; DOI:10.1515/fca-2015-0010.
		
		\bibitem{sahadevan2016exact}
		R.~Sahadevan and P.~Prakash, Exact solution of certain time fractional nonlinear partial differential equations. \emph{Nonlinear Dyn.} (2016), 1--15.
		
		\bibitem{svirshchevskii1995lie}
		S.~Svirshchevskii, Lie-b{\"a}cklund symmetries of linear odes and generalized separation of variables in nonlinear equations. \emph{Phys. lett. A} \textbf{199}, No 5-6 (1995), 344--348.
		
		\bibitem{svirshchevskii1996invariant}
		S.~R. Svirshchevskii, Invariant linear spaces and exact solutions of nonlinear evolution	equations. \emph{J. Nonlinear Math. Phys.} \textbf{3}, No 1-2 (1996), 164--169.
	\end{thebibliography}


\end{document}